\begin{document} 
\newtheorem{prop}{Proposition}[section]
\newtheorem{Def}{Definition}[section] \newtheorem{theorem}{Theorem}[section]
\newtheorem{lemma}{Lemma}[section] \newtheorem{Cor}{Corollary}[section]

\title[LWP for Chern-Simons-Dirac]{\bf A remark on low regularity solutions of the Chern-Simons-Dirac system}
\author[Hartmut Pecher]{
{\bf Hartmut Pecher}\\
Fachbereich Mathematik und Naturwissenschaften\\
Bergische Universit\"at Wuppertal\\
Gau{\ss}str.  20\\
42097 Wuppertal\\
Germany\\
e-mail {\tt pecher@math.uni-wuppertal.de}}
\date{}

\begin{abstract}
An alternative proof of low regularity well-posedness for the Chern-Simons-Dirac system in Coulomb gauge is given which completely avoids the use of any null structure similarly to a recent result of Bournaveas-Candy-Machihara.  An unconditional uniqueness result is also given.
\end{abstract}
\maketitle
\renewcommand{\thefootnote}{\fnsymbol{footnote}}
\footnotetext{\hspace{-1.5em}{\it 2000 Mathematics Subject Classification:} 
35Q40, 35L70 \\
{\it Key words and phrases:} Chern-Simons-Dirac,  
local well-posedness, Coulomb gauge}
\normalsize 
\setcounter{section}{0}
\section{Introduction and main results}
\noindent Consider the Chern-Simons-Dirac system in two space dimensions :
\begin{align}
\label{1.1}
 \frac{1}{2}\epsilon^{\mu \nu \rho} F_{\nu \rho} & =  -J^{\mu} \\
\label{1.2}
i \gamma^{\mu} D_{\mu} \psi = m\psi
 \, ,
\end{align}
with initial data
\begin{equation}
\label{*3}
A_{\mu}(0) = a_{\mu} \quad , \quad \psi(0) = \psi_0  \, , 
\end{equation}
where we use the convention that repeated upper and lower indices are summed, Greek indices run over 0,1,2 and Latin indices over 1,2. Here 
\begin{align*}
D^{\mu}  & := \partial_{\mu} - iA_{\mu} \\
 F_{\mu \nu} & := \partial_{\mu} A_{\nu} - \partial_{\nu} A_{\mu} \\
 J^{\mu} & := \langle \gamma^0 \gamma^{\mu} \psi,\psi \rangle 
\end{align*}
Here $F_{\mu \nu} : {\mathbb R}^{1+2} \to {\mathbb R}$ denotes the curvature, $\psi : {\mathbb R}^{1+2} \to {\mathbb C}^2$ , and $A_{\mu} : {\mathbb R}^{1+2} \to {\mathbb R}$ the gauge potentials. We use the notation $\langle \cdot, \cdot \rangle$ for the inner product in ${\mathbb C}^2$ , $\partial_{\mu} = \frac{\partial}{\partial x_{\mu}}$, where we write $(x^0,x^1,...,x^n) = (t,x^1,...,x^n)$ and also $\partial_0 = \partial_t$ and $\nabla = (\partial_1,\partial_2)$. $\epsilon^{\mu \nu \rho}$ is the totally skew-symmetric tensor with $\epsilon^{012}=1$,  and $m \ge 0$ .
$\gamma^{\mu}$ are the Pauli matrices
$ \gamma^0 = \left( \begin{array}{cc}
1 & 0  \\ 
0 & -1  \end{array} \right)$ 
 , $ \gamma^1 = \left( \begin{array}{cc}
0 & i  \\
i & 0  \end{array} \right)$ , $ \gamma^2 = \left( \begin{array}{cc}
0 & 1  \\
-1 & 0  \end{array} \right) \, .$ 

The equations are invariant under the gauge transformations
$$ A_{\mu} \rightarrow A'_{\mu} = A_{\mu} + \partial_{\mu} \chi \, , \, \psi \rightarrow \psi' = e^{i\chi} \phi \, , \, D_{\mu} \rightarrow D'_{\mu} = \partial_{\mu}-iA'_{\mu} \, . $$
The most common gauges are the Coulomb gauge $\partial^j A_j =0$ , the Lorenz gauge $\partial^{\mu} A_{\mu} = 0$ and the temporal gauge $A_0 = 0$. 

Our main aim is to give a simple proof of local well-posedness for data $\psi_0 \in H^s({\mathbb R}^2)$ for $ s > \frac{1}{4} $, especially we want to show that no null condition is necessary in the case of the Coulomb gauge. Critical with respect to scaling is the case $s=0$. The same result was proven recently in Coulomb as well as Lorenz gauge by M. Okamoto \cite{O} using a null structure of the system. Earlier results were given by Huh \cite{H} in the Lorenz gauge for data $\psi_0 \in H^{\frac{5}{8}} $ , $a_{\mu} \in H^{\frac{1}{2}}$ using a null structure, in the Coulomb gauge for $\psi_0 \in H^{\frac{1}{2}+\epsilon}$ , $ a_i \in L^2$, and in temporal gauge for $\psi_0 \in H^{\frac{3}{4}+\epsilon},$  $a_j \in H^{\frac{3}{4}+\epsilon} + L^2$ , both without using a null structure. Huh-Oh \cite{HO} proved local well-posedness in Lorenz gauge for $\psi_0 \in H^s$ , $a_{\mu} \in H^s$ for $s > \frac{1}{4}$ also making use of a null structure. The methods of Okamoto and Huh-Oh are different. Okamoto reduces the problem to a single Dirac equation with cubic nonlinearity for $\psi$, which does not contain $A_{\mu}$ any longer. From a solution $\psi$ of this equation the potentials $A_{\mu}$ can be constructed by solving a wave equation in Lorenz gauge and an elliptic equation in Coulomb gauge. Our proof also relies on this approach. Huh-Oh on the other hand directly solve a coupled system of a Dirac equation for $\psi$ and a wave equation for $A_{\mu}$.

Our result shows local well-posedness for data $\psi_0 \in H^s$ down to $s=\frac{1}{4}+$ whichout use of the null structure. Therefore it works also for more general systems which lead to cubic Dirac equations of the symbolic form
$$ (-i \alpha^{\mu} \partial_{\mu} +m\beta)\psi \sim \nabla^{-1}\langle \psi,\psi \rangle \psi \, . $$
An almost identical result was recently given by Bournaveas-Candy-Machihara  \cite{BCM} who were also able to avoid any use of the null structure of the system. Their proof relies on a bilinear Strichartz estimate given by Klainerman-Tataru \cite{KT} whereas we make use of bilinear estimates in wave-Sobolev spaces given by d'Ancona-Foschi-Selberg \cite{AFS}.

Our result gives uniqueness in a certain subspace of $C^0([0,T],H^s)$ of $X^{s,b}$-type. Thus it is natural to consider the question whether unconditional uniqueness also holds, namely in $C^0([0,T],H^s)$. We give a positive answer if $s >\frac{1}{3}$ using an idea of Zhou \cite{Z}.

We exclusively study the Coulomb gauge condition $\partial_j A^j = 0$. In this case one easily checks using (\ref{1.1}) that the potentials $A_{\mu}$ satisfy the elliptic equations
\begin{equation}
\label{1.10}
A_0 = \Delta^{-1}(\partial_2 J_1 - \partial_1 J_2) \, , \, A_1 = \Delta^{-1} \partial_2 J_0 \, , \, A_2 = - \Delta^{-1} \partial_1 J_0 \, . 
\end{equation}
Inserting this into (\ref{1.2}) and defining the matrices $\alpha^{\mu} = \gamma^0 \gamma^{\mu}$ , $\beta = \gamma_0$ we obtain
\begin{equation}
\label{1.11}
(i \alpha^{\mu} \partial_{\mu} - m \beta) \psi = N(\psi,\psi,\psi) \, ,
\end{equation}
where
\begin{align*}
 &N(\psi_1,\psi_2,\psi_3) \\
 &= \Delta^{-1}\left( \partial_2 \langle \alpha_1 \psi_1,\psi_2 \rangle - \partial_1 \langle \alpha_2 \psi_1,\psi_2 \rangle + \partial_2\langle \psi_1,\psi_2 \rangle \alpha_1 - \partial_1 \langle \psi_1,\psi_2 \rangle \alpha_2 \right) \psi_3 \, . 
\end{align*}
In the sequel we consider this nonlinear Dirac equation with initial condition
\begin{equation}
\label{1.12}
\psi(0) = \psi_0 \, .
\end{equation}

Using an idea of d'Ancona - Foschi -Selberg \cite{AFS1} we simplify  (\ref{1.11}) by 
considering the projections onto the one-dimensional eigenspaces of the 
operator 
$-i \alpha \cdot \nabla = -i \alpha^j \partial_j$ belonging to the eigenvalues $ \pm |\xi|$. These 
projections are given by $\Pi_{\pm} =$ $\Pi_{\pm}(D)$, where  $ D = 
\frac{\nabla}{i} $ and $\Pi_{\pm}(\xi) = \frac{1}{2}(I 
\pm \frac{\xi}{|\xi|} \cdot \alpha) $. Then $ 
-i\alpha \cdot \nabla = |D| \Pi_+(D) - |D| \Pi_-(D) $ and $ \Pi_{\pm}(\xi) \beta
= \beta \Pi_{\mp}(\xi) $. Defining $ \psi_{\pm} := \Pi_{\pm}(D) \psi$  , the Dirac  equation can be rewritten as
\begin{equation}
\label{4}
(-i \partial_t \pm |D|)\psi_{\pm}  =  m\beta \psi_{\mp} + \Pi_{\pm}N(\psi_+ + \psi_-,\psi_+ + \psi_-, \psi_+ + \psi_-) \, . 
\end{equation}
The initial condition is transformed into
\begin{equation}
\label{6}
\psi_{\pm}(0) = \Pi_{\pm}\psi_0 \, .
\end{equation}

We use the following function spaces and notation. $H^s_p$ denotes the standard $L^p$-based Sobolev space of oder $s$ , $H^s = H^s_2$ , and $B^s_{p,q}$ the Besov space as defined e.g. in \cite{BL}. Let $\, \widehat{} \,$ denote the Fourier 
transform with respect to space and time.
The standard spaces of Bougain-Klainerman-Machedon type $X^{s,b}_{\pm}$ belonging 
to 
the half waves are defined by the completion of ${\mathcal S}({\mathbb R} \times {\mathbb 
R^2})$ with respect to
$$ \|f\|_{X^{s,b}_{\pm}}  = \| \langle \xi 
\rangle^s \langle \tau \pm |\xi| \rangle^b \widehat{f}(\tau,\xi) \|_{L^2} \, ,$$
where $\langle \cdot \rangle := (1+|\cdot|^2)^{\frac{1}{2}}.$ 
$X^{s,b}_{\pm}[0,T]$ is the space of restrictions to the time interval $[0,T]$. 
Similarly $H^{s,b}$ denotes the completion of ${\mathcal S}({\mathbb R} \times {\mathbb 
R^2})$ with respect to
$$ \|f\|_{H^{s,b}}  = \| \langle \xi 
\rangle^s \langle| \tau| - |\xi| \rangle^b \widehat{f}(\tau,\xi) \|_{L^2} $$
and $H^{s,b}[0,T]$ its restriction to the time interval $[0,T]$.

We remark the embedding $X^{s,b}_{\pm} \subset H^{s,b}$ for $b \ge 0$.

Finally $a+$ and $a-$ denote numbers which are slightly larger and smaller than $a$ respectively, such that $a--< a- < a < a+ < a++$ .

We now formulate our results.

\begin{theorem}
\label{Theorem 1.1}
Assume $\psi_0 \in H^s({\mathbb R}^2)$ with $s > \frac{1}{4}$. Then (\ref{1.11}),(\ref{1.12}) is locally well-posed in $H^s({\mathbb R}^2)$. More presicely there are $T>0$ , $b> \frac {1}{2}$ such that there exists a unique solution $\psi = \psi_+ + \psi_-$ with $\psi_{\pm} \in X^{s,b}_{\pm}[0,T]$. This solution belongs to $C^0([0,T],H^s({\mathbb R}^2))$.
\end{theorem}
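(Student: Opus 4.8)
The plan is to solve the equivalent system \eqref{4},\eqref{6} by a contraction argument in the product space $X^{s,b}_+[0,T]\times X^{s,b}_-[0,T]$ with $b=\tfrac12+$ fixed close to $\tfrac12$, so that the embedding $X^{s,b}_\pm[0,T]\subset C^0([0,T],H^s)$ is available and yields the last assertion of the theorem. By the standard linear estimates for the half-wave operators $-i\partial_t\pm|D|$ in Bourgain spaces — the homogeneous estimate together with the Duhamel estimate and its $T^{\varepsilon}$-gain in the modulation exponent — the whole construction reduces, on a time-localized extension, to two nonlinear estimates. The first is harmless: since the modulation exponent $b-1+\varepsilon$ is negative one has
\[
\|m\beta\psi_\mp\|_{X^{s,b-1+\varepsilon}_\pm}\ \lesssim\ \|\psi_\mp\|_{X^{s,0}_\mp}\ \lesssim\ \|\psi_\mp\|_{X^{s,b}_\mp}.
\]
The second is the genuinely trilinear estimate; using $H^{s,c}\subset X^{s,c}_\pm$ for $c<0$ on the output side and $X^{s,b}_\pm\subset H^{s,b}$ on the inputs, and noting that $\Delta^{-1}\partial_\ell$ is, up to a bounded Riesz multiplier, the operator $|D|^{-1}$ and that the matrices $\alpha_\ell$ and complex conjugation are irrelevant for the estimates, it comes down to
\[
\bigl\||D|^{-1}\!\bigl(\psi_1\overline{\psi_2}\bigr)\,\psi_3\bigr\|_{H^{s,-\frac12+}}\ \lesssim\ \prod_{j=1}^3\|\psi_j\|_{H^{s,b}}.
\]

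To prove this I would dualize: the left side equals the supremum over $\|\varphi\|_{H^{-s,\frac12-}}\le1$ of $\bigl|\int_{\mathbb R^{1+2}}|D|^{-1}(\psi_1\overline{\psi_2})\,\psi_3\,\overline{\varphi}\bigr|$, and by Plancherel the weight $|D|^{-1}$ may be distributed as $|D|^{-\theta}$ on the factor $\psi_1\overline{\psi_2}$ and $|D|^{-(1-\theta)}$ on the factor $\psi_3\overline{\varphi}$, for a free parameter $\theta\in(0,1)$; since $\theta$ and $1-\theta$ both lie in $(0,1)$, these homogeneous weights are locally square-integrable at frequency zero, so the low-frequency region causes no trouble and both factors are genuinely paired. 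It then suffices to find exponents $\rho\in\mathbb R$, $\beta$ and $\theta\in(0,1)$ for which the two bilinear product estimates in wave-Sobolev spaces of d'Ancona--Foschi--Selberg \cite{AFS},
\[
\|\psi_1\overline{\psi_2}\|_{H^{\rho+\theta,\,\beta}}\ \lesssim\ \|\psi_1\|_{H^{s,b}}\|\psi_2\|_{H^{s,b}},\qquad
\|\psi_3\overline{\varphi}\|_{H^{(1-\theta)-\rho,\,-\beta}}\ \lesssim\ \|\psi_3\|_{H^{s,b}}\,\|\varphi\|_{H^{-s,\frac12-}},
\]
both hold, since then $|D|^{-\theta}(\psi_1\overline{\psi_2})\in H^{\rho,\beta}$ and $|D|^{-(1-\theta)}(\psi_3\overline{\varphi})\in H^{-\rho,-\beta}$ and the pairing $\int f\,g\lesssim\|f\|_{H^{\rho,\beta}}\|g\|_{H^{-\rho,-\beta}}$ closes the estimate. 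The input modulation exponents are here equal to $b>\tfrac12$, close to the largest useful value, which is precisely what permits the Sobolev exponents in the two \cite{AFS} estimates to be pushed below $\tfrac12$; adding the two Sobolev-exponent conditions one is left with the combined budget $2s-1$, which is $>-\tfrac12$ exactly when $s>\tfrac14$, and $\rho,\theta$ can then be chosen so that this budget is split admissibly between the two estimates.

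The main obstacle is precisely this verification: one must check that the full list of conditions in the d'Ancona--Foschi--Selberg theorem for $n=2$ — not only the sum conditions on the Sobolev and modulation indices, but also the pairwise positivity requirements $s_i+s_j\ge0$, the ``corner'' conditions, and the strict inequalities at the endpoints — can be met simultaneously by a single admissible triple $(\rho,\beta,\theta)$ once $s>\tfrac14$ and $b>\tfrac12$, and it is here that the threshold $s>\tfrac14$ becomes forced; a short case distinction according to which of the four frequencies involved is dominant, together with a separate elementary H\"older--Sobolev treatment of the region where all frequencies are bounded, makes this manageable. Granting the trilinear estimate, $N$ is a bounded trilinear map into $X^{s,b-1+\varepsilon}_\pm[0,T]$, the $T^{\varepsilon}$-gain turns the iteration map into a contraction on a ball for $T$ sufficiently small, and its fixed point is the asserted solution $\psi=\psi_++\psi_-$; uniqueness in all of $X^{s,b}_\pm[0,T]$ (not merely in a ball) follows by the usual subdivision-of-the-interval argument, and the continuity in time from $b>\tfrac12$.
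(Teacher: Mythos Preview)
Your approach is essentially the paper's: dualize the trilinear estimate to a four-linear pairing, split the $|D|^{-1}$ weight across the two bilinear factors $\psi_1\overline{\psi_2}$ and $\psi_3\overline{\varphi}$, and apply the d'Ancona--Foschi--Selberg product theorem to each. The paper carries this out by first restricting to $|\xi_0|\ge 1$ (so that $|D|^{-1}\sim\langle D\rangle^{-1}$ and your free parameter $\theta$ becomes superfluous), then recording explicit AFS parameter choices with a case split $\tfrac14<s<\tfrac34$ versus $s\ge\tfrac34$ (forced by the pairwise condition $s_0+s_2>0$ on the second factor, which your ``budget'' heuristic does not see); the region $|\xi_0|\le 1$ is handled separately via Besov embeddings and a product lemma from Runst--Sickel. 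Two small cautions on your write-up: the remark that local $L^2$-integrability of $|\xi|^{-\theta}$ makes the low output-frequency region harmless is not itself an estimate---you still need the separate low-$|\xi_0|$ argument you allude to later, since $|D|^{-\theta}$ does \emph{not} map $H^{\rho+\theta,\beta}$ into $H^{\rho,\beta}$ at frequency zero---and the ``case distinction according to which of the four frequencies is dominant'' is unnecessary, as the AFS theorem already absorbs all frequency interactions once $|\xi_0|\ge 1$.
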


The unconditional uniqueness result is the following
\begin{theorem}
\label{Theorem 1.2}
Assume $ \psi_0 \in H^s({\mathbb R}^2)$ with $ s > \frac{1}{3}$. The solution of (\ref{1.11}),(\ref{1.12}) is unique in $C^0([0,T],H^s({\mathbb R}^2))$.
\end{theorem}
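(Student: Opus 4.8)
The plan is to follow the idea of Zhou \cite{Z}: instead of estimating the difference of two solutions directly, one shows that \emph{every} solution lying in $C^0([0,T],H^s)$ automatically lies in a Bourgain-type space to which the conditional uniqueness of Theorem \ref{Theorem 1.1} applies. So let $\psi=\psi_++\psi_-$ be any solution of (\ref{1.11}), (\ref{1.12}) with $\psi\in C^0([0,T],H^s)$ and $s>\frac13$. Since $[0,T]$ is bounded we have $\psi_\pm\in X^{s,0}_\pm[0,T]=L^2([0,T],H^s)$. Writing the Duhamel representation of (\ref{4}) as $\psi_\pm=\psi_\pm^{\mathrm{lin}}+\Phi_\pm$, where $\psi_\pm^{\mathrm{lin}}$ solves the linear (massive) Dirac flow with data $\Pi_\pm\psi_0$ — hence belongs to $X^{s,b'}_\pm[0,T]$ for every $b'$ — and $\Phi_\pm$ is the Duhamel term with zero data, the goal is to upgrade the modulation regularity of $\Phi_\pm$: to prove $\Phi_\pm\in X^{\sigma,b}_\pm[0,T]$ for some $b>\frac12$ and some $\sigma$ with $\frac14<\sigma\le s$.

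I would carry this out by a bootstrap built on the Duhamel formula, as in \cite{Z}. Inserting $\psi=\psi^{\mathrm{lin}}+\Phi$ into the cubic term and expanding multilinearly, $N(\psi,\psi,\psi)$ becomes a finite sum of terms each having $j$ factors equal to some $\Phi_\pm$ and $3-j$ factors equal to some $\psi^{\mathrm{lin}}_\pm$, $0\le j\le 3$. For $j=0$ the trilinear estimate already used in the proof of Theorem \ref{Theorem 1.1} gives $N(\psi^{\mathrm{lin}},\psi^{\mathrm{lin}},\psi^{\mathrm{lin}})\in X^{s,-\frac12+}_\pm[0,T]$, which contributes $X^{s,\frac12+}_\pm[0,T]$ to $\Phi_\pm$ through the energy/Duhamel estimate. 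For $j\ge1$ one needs variants of this trilinear estimate in which $j$ of the three factors are measured only in $X^{s,0}$ (i.e.\ in the $C^0H^s$ norm) instead of in $X^{s,\frac12+}$; such estimates follow from the bilinear wave-Sobolev product estimates of d'Ancona--Foschi--Selberg \cite{AFS}, but each factor whose modulation index is dropped from $\frac12$ to $0$ costs a loss of spatial regularity, so one only obtains the output in $X^{\sigma,-\frac12+}_\pm[0,T]$ with $\sigma<s$. Iterating through finitely many bootstrap steps — at each step feeding back the improved modulation bound on $\Phi_\pm$, with the mass term $m\beta\psi_\mp$ handled as the trivial linear contribution — yields $\Phi_\pm\in X^{\sigma,\frac12+}_\pm[0,T]$, hence $\psi_\pm\in X^{\sigma,\frac12+}_\pm[0,T]$; the hypothesis $s>\frac13$ is exactly what keeps the accumulated regularity loss inside the range $\sigma>\frac14$.

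Finally, given two solutions $\psi,\tilde\psi\in C^0([0,T],H^s)$ of (\ref{1.11}), (\ref{1.12}) with the same data, the previous step puts both into $\{\chi_++\chi_-:\chi_\pm\in X^{\sigma,b}_\pm[0,T]\}$ for a common $\sigma\in(\frac14,s]$ and $b>\frac12$. Theorem \ref{Theorem 1.1}, applied at regularity $\sigma$, then forces $\psi=\tilde\psi$ on a subinterval $[0,T_1]$ whose length depends only on $\|\psi_0\|_{H^\sigma}$; since $\psi_\pm,\tilde\psi_\pm$ stay bounded in $X^{\sigma,b}_\pm$ on all of $[0,T]$, a finite iteration of this local uniqueness covers $[0,T]$ and gives $\psi=\tilde\psi$. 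I expect the main obstacle to be the family of trilinear estimates with low-modulation inputs: one must check that each can be derived from \cite{AFS} with a loss of spatial regularity small enough for the bootstrap to close above $s=\frac14$, and in particular that the worst term, with all three factors in $X^{s,0}$, remains admissible once $s>\frac13$. Incorporating the projections $\Pi_\pm$, the mass coupling, and the restriction to $[0,T]$ into these estimates is routine but has to be done carefully.
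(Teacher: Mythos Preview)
Your overall strategy---upgrade any $C^0H^s$ solution to a Bourgain-type space and then invoke conditional uniqueness---is indeed Zhou's idea and is what the paper does. But the paper's implementation is markedly simpler than your iterative bootstrap, and also differs in its endpoint, so it is worth spelling out the contrast.

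First, the paper does \emph{not} build up modulation regularity by repeatedly feeding $\Phi_\pm$ back into the trilinear AFS estimates. Instead it proves in one line, via H\"older, Hardy--Littlewood--Sobolev ($|\nabla|^{-1}:L^{3/2}\to L^6$) and the embedding $H^{1/3}\hookrightarrow L^3$, that
\[
\|N(\psi_1,\psi_2,\psi_3)\|_{L^2_tL^2_x}\lesssim\prod_{j=1}^3\|\psi_j\|_{L^\infty_tH^{1/3}_x}.
\]
This immediately gives $\psi_\pm\in X^{0,1}_\pm[0,T]$. A single interpolation with $\psi_\pm\in X^{\frac13+\epsilon,0}_\pm[0,T]$ then yields $\psi_\pm\in X^{\frac14+\frac\epsilon4,\frac14+\epsilon}_\pm[0,T]$. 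No multi-step bootstrap, and no need to verify a whole family of low-modulation trilinear estimates from \cite{AFS}; the threshold $s>\frac13$ arises transparently from $H^{1/3}\hookrightarrow L^3$.

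Second, the paper never reaches $b>\frac12$ and hence does \emph{not} conclude by invoking Theorem \ref{Theorem 1.1}. It stops at $b=\frac14+\epsilon$ and instead proves a separate difference estimate: with two factors measured in $X^{\frac14+\frac\epsilon4,\frac14+\epsilon}_\pm$ and the difference $\psi_\pm-\psi'_\pm$ measured in $X^{0,\frac12+}_\pm$, one gets a contraction in $X^{0,\frac12+}_\pm[0,T]$ for small $T$ (the relevant AFS application uses $s_0=\frac12$, $b_0=0$). This is arguably the natural thing to do, since pushing all the way to $b>\frac12$ at some $\sigma>\frac14$ from $X^{\frac13+\epsilon,0}\cap X^{0,1}$ by interpolation alone is not possible (it only gives $\sigma<\frac16+\frac\epsilon2$ at $b=\frac12+$), so your route would genuinely require the extra bootstrap steps you allude to; these may well close, but you have not checked them, and the paper's argument shows they are unnecessary.
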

Fundamental for their proof are the following bilinear estimates in wave-Sobolev spaces which were proven by d'Ancona, Foschi and Selberg in the two dimensional case $n=2$ in \cite{AFS} in a more general form which include many limit cases which we do not need.
\begin{theorem}
\label{Theorem 3.2}
Let $n=2$. The estimate
$$\|uv\|_{H^{-s_0,-b_0}} \lesssim \|u\|_{H^{s_1,b_1}} \|v\|_{H^{s_2,b_2}} $$
holds, provided the following conditions hold:
\begin{align*}
& b_0 + b_1 + b_2 > \frac{1}{2} \\
& b_0 + b_1 > 0 \\
& b_0 + b_2 > 0 \\
& b_1 + b_2 > 0 \\
&s_0+s_1+s_2 > \frac{3}{2} -(b_0+b_1+b_2) 
\\
&s_0+s_1+s_2 > 1 -(b_0+b_1) \\
&s_0+s_1+s_2 > 1 -(b_0+b_2) \\
 &s_0+s_1+s_2 > 1 -(b_1+b_2) \\
 &s_0+s_1+s_2 > \frac{1}{2} - b_0 \\
&s_0+s_1+s_2 > \frac{1}{2} - b_1 \\
&s_0+s_1+s_2 > \frac{1}{2} - b_2 \\
&s_0+s_1+s_2 > \frac{3}{4} 
\end{align*}
 \begin{align*}
&(s_0 + b_0) +2s_1 + 2s_2 > 1 \\
&2s_0+(s_1+b_1)+2s_2 > 1 \\
&2s_0+2s_1+(s_2+b_2) > 1  \\
&s_1 + s_2 \ge \max(0,-b_0) \\
&s_0 + s_2 > \max(0,-b_1) \\
&s_0 + s_1 > \max(0,-b_2) 
\end{align*}
\end{theorem}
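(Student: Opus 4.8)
The plan is to note that this is precisely the two–dimensional instance of the general product law for wave--Sobolev spaces established by d'Ancona, Foschi and Selberg in \cite{AFS}, so for the purposes of this paper it suffices to quote their result; the long list of inequalities is exactly the set of constraints under which their argument produces a finite bound, and none of the limit cases they also treat is needed here. For completeness I indicate the structure of the proof one would carry out. First, by duality (and Plancherel), the claimed estimate is equivalent to the symmetric trilinear bound $|\int \widehat{u}(X_1)\widehat{v}(X_2)\overline{\widehat{w}(X_0)}| \lesssim \|u\|_{H^{s_1,b_1}}\|v\|_{H^{s_2,b_2}}\|w\|_{H^{s_0,b_0}}$, where the integration is over $X_0 = X_1+X_2$ with $X_j = (\tau_j,\xi_j)$, and the weights $\langle\xi_j\rangle^{s_j}\langle|\tau_j|-|\xi_j|\rangle^{b_j}$ have been distributed onto all three factors.

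Next I would perform a Littlewood--Paley decomposition in the spatial frequencies, $|\xi_j|\sim N_j$, together with a dyadic decomposition in the modulation variables, $\big||\tau_j|-|\xi_j|\big|\sim L_j$. This reduces everything to the core dyadic estimate
$$ \| P_{N_0} Q_{L_0}( (P_{N_1}Q_{L_1}u)(P_{N_2}Q_{L_2}v))\|_{L^2_{t,x}} \lesssim \mathcal{C}(N_0,N_1,N_2;L_0,L_1,L_2)\, \|u\|_{L^2}\|v\|_{L^2} , $$
after which one inserts the weights $N_j^{s_j}L_j^{b_j}$ and sums the resulting six–fold dyadic series. The strict inequalities in the statement are exactly the conditions making the corresponding geometric sums converge, while the single non-strict condition $s_1+s_2 \ge \max(0,-b_0)$ corresponds to a borderline summation over one dyadic block.

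The core estimate itself is purely geometric: by Plancherel it amounts to controlling the measure of the intersection of two thickened characteristic cones, i.e. the region where $X_1$ lies in an $L_1$–neighbourhood of $\{|\tau|=|\xi|\}$, $X_2$ in an $L_2$–neighbourhood, and $X_0 = X_1+X_2$ in an $L_0$–neighbourhood, with the frequency restrictions $|\xi_j|\sim N_j$. One runs a case analysis according to which of $L_0,L_1,L_2$ is the largest and according to whether $\xi_1$ and $\xi_2$ are roughly parallel, roughly anti-parallel, or transversal; in two dimensions this yields factors of the schematic shape $(\min_j L_j)^{1/2}(\min_j N_j)^{1/2}$ with corrections governed by the angular separation of $\xi_1$ and $\xi_2$. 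Crucially, only the \emph{size} of the output is used, not any cancellation, so no null form enters.

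I expect the main obstacle to be the resonant (concentration) regime: the interactions in which all three modulations $L_0,L_1,L_2$ are small while the two input space-time frequencies cluster near a common null ray (or opposite null rays), so that the product concentrates near the light cone and, in the anti-parallel subcase, $|\xi_0|=|\xi_1+\xi_2|$ is much smaller than $N_1\sim N_2$. Here the naive counting fails and one must exploit the two-dimensional transversality of the thickened cones carefully — equivalently, a bilinear $L^2$ restriction/orthogonality estimate for the cone — and it is precisely the optimization in this regime that forces the admissible exponents to have the exact form recorded in the statement.
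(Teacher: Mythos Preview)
Your proposal is correct and in fact goes beyond what the paper does: the paper does not prove this theorem at all but simply quotes it from d'Ancona--Foschi--Selberg \cite{AFS}, remarking that their result is stated there in a more general form including limit cases not needed here. Your opening sentence already matches the paper's treatment exactly; the subsequent sketch of the Littlewood--Paley/modulation decomposition and the geometric cone-intersection analysis is an accurate outline of how \cite{AFS} actually proceeds, but none of it appears in the present paper.
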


\section{Proof of the theorems}
\begin{proof}[Proof of Theorem \ref{Theorem 1.1}]
By standard arguments we only have to show
$$ \| N(\psi_1,\psi_2,\psi_3) \|_{X^{s,-\frac{1}{2}++}_{\pm_4}} \lesssim \prod_{i=1}^3 \|\psi_i\|_{X^{s,\frac{1}{2}+}_{\pm_ i}} \, , $$
where $\pm_i$ $(i=1,2,3,4)$ denote independent signs.

By duality this is reduced to the estimates
$$ J:= \int \langle N(\psi_1,\psi_2,\psi_3),  \psi_4 \rangle dt\, dx \lesssim \prod_{i=1}^3 \|\psi_i\|_{X^{s,\frac{1}{2}+}_{\pm_i}} \|\psi_4\|_{X^{-s,\frac{1}{2}--}_{\pm_4}} \, . $$
By Fourier-Plancherel we obtain
$$ J = \int_* q(\xi_1,...,\xi_4) \prod_{j=1}^4 \widehat{\psi}_j(\xi_j,\tau_j) d\xi_1\, d\tau_1 ... d\xi_4\,d\tau_4 \, , $$
where * denotes integration over $\xi_1-\xi_2=\xi_4-\xi_3=:\xi_0$ and $\tau_1-\tau_2=\tau_4-\tau_3$ and
\begin{align*}
q =  \frac{1}{|\xi_0|^2} &[( \xi_{0_2}(\langle \alpha_1 \widehat{\psi}_1,\widehat{\psi}_2 \rangle \langle \widehat{\psi}_3,\widehat{\psi}_4\rangle +  \langle  \widehat{\psi}_1,\widehat{\psi}_2 \rangle \langle \alpha_1 \widehat{\psi}_3,\widehat{\psi}_4\rangle) \\
& - \xi_{0_1}(\langle \alpha_2 \widehat{\psi}_1,\widehat{\psi}_2 \rangle \langle \widehat{\psi}_3,\widehat{\psi}_4\rangle +  \langle  \widehat{\psi}_1,\widehat{\psi}_2 \rangle \langle \alpha_2 \widehat{\psi}_3,\widehat{\psi}_4\rangle) ] \, .
\end{align*}
The specific structure of this term, namely the form of the matrices $\alpha_j$ plays no role in the following, thus the null structure is completely ignored.

We first consider the case $|\xi_0| \le 1$. In this case we estimate $J$ as follows:
\begin{align*}
 &\| \langle \nabla \rangle^{-s-1} |\nabla|^{-\frac{1}{2}} \langle \alpha_i \psi_1,\psi_2 \rangle \|_{L^2_{xt}} \lesssim
\| \langle \nabla \rangle^{-s-1} \langle \alpha_i \psi_1,\psi_2 \rangle \|_{L^2_{x} L^{\frac{4}{3}}_x} 
\lesssim \|\langle \alpha_i \psi_1,\psi_2 \rangle \|_{L^2_t  B^{-s-1}_{\frac{4}{3},1}} \\
&\lesssim \|\langle \alpha_i \psi_1,\psi_2 \rangle \|_{L^2_t  B^{-s-1+}_{\frac{4}{3},\infty}}
\lesssim \|\langle \alpha_i \psi_1,\psi_2 \rangle \|_{L^2_t  B^{-s}_{1,\infty}}
\lesssim \|\psi_1\|_{L^4_t H^s_x} \|\psi_2\|_{L^4_t H^{-s}_x} \, ,
\end{align*}
where we used the embeddings $B^{-s}_{1,\infty} \subset B^{-s-1+}_{\frac{4}{3},\infty} \subset B^{-s-1}_{\frac{4}{3},1} \subset H^{-s-1}_{\frac{4}{3}}$ , which hold by \cite{BL}, Thm. 6.2.4
and Thm. 6.5.1. The last inequality follows from \cite{RS}, namely the Lemma in Chapter 4.4.3.
The same estimate holds for $\alpha_i = I$. Similarly we obtain 
\begin{align*}
 &\| \langle \nabla \rangle^{-s-1} |\nabla|^{-\frac{1}{2}} \langle \alpha_i \psi_3,\psi_4 \rangle \|_{L^2_{xt}}  
\lesssim \| \psi_3\|_{L^4_t H^s_x} \| \psi_4\|_{L^4_t H^{-s}_x} 
\end{align*}
for arbitrary matrices $\alpha_i$ ,
so that we obtain
\begin{align*}
J &  \lesssim \|\psi_1\|_{X^{s,\frac{1}{4}}_{\pm_1}} \|\psi_2\|_{X^{-s,\frac{1}{4}}_{\pm_2}} \|\psi_3\|_{X^{s,\frac{1}{4}}_{\pm_3}} \|\psi_4\|_{X^{-s,\frac{1}{4}}_{\pm_4}} \, , \end{align*}
which is more than enough.
From now on we assume $|\xi_0| \ge 1$. Assume first that $\frac{3}{4} > s > \frac{1}{4}$. We obtain
\begin{align*}
|J|  \lesssim \sum_{j=1}^2 & \big( \|\langle \alpha_j \psi_1,\psi_2 \rangle \|_{H^{-\frac{1}{4}+,\frac{1}{4}+}} \|\langle \psi_3,\psi_4\rangle\|_{H^{-\frac{3}{4}-,-\frac{1}{4}-}} \\
& + \|\langle \psi_1,\psi_2 \rangle \|_{H^{-\frac{1}{4}+,\frac{1}{4}+}} \|\langle \alpha_j \psi_3,\psi_4\rangle\|_{H^{-\frac{3}{4}-,-\frac{1}{4}-}} \big)\, .
\end{align*}
 By Theorem \ref{Theorem 3.2} with $s_0=\frac{1}{4}-$ , $b_0=-\frac{1}{4}-$ , $s_1=s_2 =s$ , $b_1=b_2=\frac{1}{2}+\epsilon$ for the first factors and $s_0 = \frac{3}{4}+$ , $b_0 = \frac{1}{4}+$ , $s_1=s$ , $s_2=-s$ , $b_1=\frac{1}{2}+\epsilon$ , $b_2=\frac{1}{2}-2\epsilon$ for the second factors we obtain under the assumption $\frac{3}{4} > s > \frac{1}{4}$ :
$$ |J| \lesssim \prod_{j=1}^3 \|\psi_j\|_{H^{s,\frac{1}{2}+\epsilon}} \|\psi_4\|_{H^{-s,\frac{1}{2}-2\epsilon}} \, .$$
Using the embedding $X^{s,b}_{\pm} \subset H^{s,b}$ for $s \in {\mathbb R}$ and $b \ge 0$ we obtain the desired estimate.\\
Next assume $s \ge \frac{3}{4}$. 
We obtain
\begin{align*}
|J|  \lesssim \sum_{j=1}^2 & \big( \|\langle \alpha_j \psi_1,\psi_2 \rangle \|_{H^{s-1+,\frac{1}{4}+}} \|\langle \psi_3,\psi_4\rangle\|_{H^{-s-,-\frac{1}{4}-}} \\
& + \|\langle \psi_1,\psi_2 \rangle \|_{H^{s-1+,\frac{1}{4}+}} \|\langle \alpha_j \psi_3,\psi_4\rangle\|_{H^{-s-,-\frac{1}{4}-}} \big)\, .
\end{align*}
 By Theorem \ref{Theorem 3.2} with $s_0=1-s-$ , $b_0=-\frac{1}{4}-$ , $s_1=s_2 =s$ , $b_1=b_2=\frac{1}{2}+\epsilon$ for the first factors and $s_0 = s+$ , $b_0 = \frac{1}{4}+$ , $s_1=s$ , $s_2=-s$ , $b_1=\frac{1}{2}+\epsilon$ , $b_2=\frac{1}{2}-2\epsilon$ for the second factors we obtain the same estimate as before.
\end{proof}
{\bf Remark:} The potentials are completely determined by $\psi$ and (\ref{1.10}). We have $A_{\mu} \sim |\nabla|^{-1} \langle \psi, \psi \rangle$ , so that for $s<1$ :
$$ \|A_{\mu}\|_{\dot{H}^{2s}} \lesssim \|\langle \psi,\psi\rangle \|_{\dot{H}^{2s-1}} \lesssim  \|\langle \psi,\psi\rangle \|_{L^{\frac{1}{1-s}}} \lesssim \|\psi\|^2_{L^{\frac{2}{1-s}}} \lesssim \|\psi\|^2_{H^s} < \infty $$
and
$$ \|A_{\mu}\|_{\dot{H}^{\epsilon}} \lesssim \|\langle \psi,\psi\rangle \|_{\dot{H}^{\epsilon -1}} \lesssim \|\psi\|^2_{L^{\frac{4}{2-\epsilon}}} \lesssim \|\psi\|^2_{H^s} < \infty \, ,$$
thus we obtain for $0<\epsilon\ll 1$ and $s<1$ :
$$ A_{\mu} \in C^0([0,T],\dot{H}^{2s} \cap \dot{H}^{\epsilon}) \,. $$

\begin{proof}[Proof of Theorem \ref{Theorem 1.2}]
We first show $\psi_{\pm} \in X^{0,1}_{\pm}[0,T]$. We have to prove
$$ \|N(\psi_1,\psi_2,\psi_3)\|_{L^2_t([0,T],L^2_x)} \lesssim \prod_{j=1}^3 \|\psi_j\|_{L^{\infty}_t([0,T], H^{\frac{1}{3}}_x)} \, , $$
where the implicit constant may depend on $T$ .
This follows from the estimate
\begin{align*}
\| |\nabla|^{-1} \langle \alpha_i \psi_j,\psi_k \rangle \psi_3\|_{L^2_x} & \lesssim \| |\nabla|^{-1} \langle \alpha_i \psi_j,\psi_k \rangle \|_{L^6_x} \|\psi_3\|_{L^3_x} \lesssim \| \langle \alpha_i \psi_j,\psi_k \rangle \|_{L^{\frac{3}{2}}_x} \|\psi_3\|_{L^3_x} \\
& \lesssim  \|\psi_j\|_{L^3_x} \|\psi_k\|_{L^3_x} \|\psi_3\|_{L^3_x} \lesssim \|\psi_j\|_{H^{\frac{1}{3}}_x} \|\psi_k\|_{H^{\frac{1}{3}}_x}\|\psi_3\|_{H^{\frac{1}{3}}_x} \, ,
\end{align*}
 and a similar estimate for the term $\| |\nabla|^{-1} \langle  \psi_j,\psi_k \rangle \alpha_i \psi_3\|_{L^2_x}$ .\\
Assume now $\psi \in C^0([0,T],H^{\frac{1}{3}+\epsilon})$ , $\epsilon > 0$. Then we have shown that $\psi_{\pm} \in X_{\pm}^{\frac{1}{3}+\epsilon,0}[0,T] \cap X^{0,1}_{\pm}[0,T]$. By interpolation we get $\psi_{\pm} \in X^{\frac{1}{4}+\frac{\epsilon}{4},\frac{1}{4}+\epsilon}_{\pm} [0,T] $ for $ \epsilon \ll 1.$ \\
Assume now that $\psi,\psi' \in C^0([0,T],H^{\frac{1}{3}+\epsilon})$ are two solutions of (\ref{1.11}),(\ref{1.12}), Then we have
\begin{align}
\label{**}
\sum_{\pm} \|\psi_{\pm}-\psi_{\pm}'\|_{X^{0,\frac{1}{2}+}_{\pm}[0,T]} 
&\lesssim T^{0+} \sum_{\pm} \|N(\psi,\psi,\psi)-N(\psi',\psi',\psi')\|_{X_{\pm}^{0,-\frac{1}{2}++}[0,T]}
\\ \nonumber
& \lesssim T^{0+} \sum_{\pm,\pm_1,\pm_2,\pm_3} \big(\|N(\psi_{\pm_1}-\psi_{\pm_1}',\psi_{\pm_2},\psi_{\pm_3})\|_{X_{\pm}^{0,-\frac{1}{2}++}[0,T]}\\ \nonumber
& \hspace{6.5em}+  \|N(\psi_{\pm_1}',\psi_{\pm_2}-\psi_{\pm_2}',\psi_{\pm_3})\|_{X_{\pm}^{0,-\frac{1}{2}++}[0,T]}\\ \nonumber
&\hspace{6.5em} +\|N(\psi_{\pm_1}',\psi_{\pm_2}',\psi_{\pm_3}-\psi_{\pm_3}')\|_{X_{\pm}^{0,-\frac{1}{2}++}[0,T]}\big)
\end{align}
Here $\pm$,$\pm_j$ $(j=1,2,3)$ denote independent signs. We want to show that for the first term the following estimate holds:
\begin{align}
\label{*}
\nonumber
J:= &\int \langle N(\psi_{\pm_1}-\psi_{\pm_1}',\psi_{\pm_2},\psi_{\pm_3}),\psi_{4} \rangle dx\,dt \\ 
&\lesssim \|\psi_{\pm_1}-\psi_{\pm_1}'\|_{X^{0,\frac{1}{2}+}_{\pm_1}} \|\psi_{\pm_2}\|_{X^{\frac{1}{4}+\frac{\epsilon}{4},\frac{1}{4}+\epsilon}_{\pm_2}} 
\|\psi_{\pm_3}\|_{X^{\frac{1}{4}+\frac{\epsilon}{4},\frac{1}{4}+\epsilon}_{\pm_3}}
\|\psi_{4}\|_{X^{0,\frac{1}{2}--}_{\pm_4}} \, .
\end{align}
We consider the case $|\xi_0|\le 1$ first. Similarly as in the proof of Theorem \ref{Theorem 1.1} we obtain
$$ |J| \lesssim \|\psi_{\pm_1} - \psi_{\pm_1}'\|_{X^{-\frac{1}{4}-\frac{\epsilon}{4},\frac{1}{4}}_{\pm_1}}  \|\psi_{\pm_2}\|_{X^{\frac{1}{4}+\frac{\epsilon}{4},\frac{1}{4}}_{\pm_2}}
\|\psi_{\pm_3}\|_{X^{\frac{1}{4}+\frac{\epsilon}{4},\frac{1}{4}}_{\pm_3}} 
\|\psi_{4}\|_{X^{-\frac{1}{4}-\frac{\epsilon}{4},\frac{1}{4}}_{\pm_4}}\, , $$
which is more than sufficient. For $|\xi_0| \ge 1$ we obtain
\begin{align*}
|J| &\lesssim \sum_{j=1}^2 \big( \| \langle \alpha_j(\psi_{\pm_1} - \psi_{\pm_1}'),\psi_{\pm_2} \rangle \|_{H^{-\frac{1}{2},0}} \|\langle \psi_{\pm_3},\psi_4 \rangle\|_{H^{-\frac{1}{2},0}}\\ &\hspace{2.5em}
 + \| \langle \psi_{\pm_1} - \psi_{\pm_1}',\psi_{\pm_2} \rangle \|_{H^{-\frac{1}{2},0}} \|\langle \alpha_j \psi_{\pm_3},\psi_4 \rangle\|_{H^{-\frac{1}{2},0}}\big)\\
 & \lesssim \|\psi_{\pm_1} - \psi_{\pm_1}'\|_{H^{0,\frac{1}{2}+}} \|\psi_{\pm_2}\|_{H^{\frac{1}{4}+\frac{\epsilon}{4},\frac{1}{4}+\epsilon}}
 \|\psi_{\pm_3}\|_{H^{\frac{1}{4}+\frac{\epsilon}{4},\frac{1}{4}+\epsilon}}
 \|\psi_4\|_{H^{0,\frac{1}{2}--}} \, ,
\end{align*}
where we used Theorem \ref{Theorem 3.2} for the first factor with the choice
$s_0 = \frac{1}{2}$ , $b_0=0$ , $s_1=0$ , $b_1=\frac{1}{2}+$ , $s_2=\frac{1}{4}+\frac{\epsilon}{4}$ , $b_2 = \frac{1}{4}+\epsilon$ and for the second factor with $s_0 = \frac{1}{2}$ , $b_0=0$ , $s_1= \frac{1}{4}+\frac{\epsilon}{4}$ , $b_1 = \frac{1}{4}+\epsilon$ , $s_2=0$ , $b_2= \frac{1}{2}--$. The embedding  $X^{s,b}_{\pm} \subset H^{s,b}$ for $b \ge 0$ gives (\ref{*}). The other terms in (\ref{**}) are treated similarly. We obtain
\begin{align*}
&\sum_{\pm} \|\psi_{\pm} - \psi_{\pm}'\|_{X^{0,\frac{1}{2}+}_{\pm}[0,T]} \\
& \lesssim T^{0+} \sum_{j=1}^2 \big( \|\psi_{\pm_j}\|^2_{X^{\frac{1}{4}+\frac{\epsilon}{4}, \frac{1}{4}+\epsilon}_{\pm_j}[0,T]} +\|\psi_{\pm_j}'\|^2_{X^{\frac{1}{4}+\frac{\epsilon}{4}, \frac{1}{4}+\epsilon}_{\pm_j}[0,T]} \big) \sum_{\pm} \|\psi_{\pm}-\psi_{\pm}'\|_{X^{0,\frac{1}{2}+}_{\pm}[0,T]} \, .
\end{align*}
For sufficiently small $T$ this implies $\|\psi_{\pm} - \psi_{\pm}'\|_{X^{0,\frac{1}{2}+}_{\pm}[0,T]}=0$ , thus local uniqueness.
By iteration $T$ can be chosen arbitrarily.
\end{proof}

\end{document}